\documentclass[11pt, reqno]{amsart}
\usepackage[utf8]{inputenc}
\usepackage{mathrsfs}
\usepackage{mathtools}
\usepackage{amsmath}
\usepackage{amsthm}
\numberwithin{equation}{section}

\usepackage[left=2.5cm, right=2.5cm]{geometry}
%\usepackage{makeidx}
%\makeindex

\usepackage{multicol}
\usepackage{tikz,pgf}
\usepackage{tikz-cd}
\usepackage{float}
\usepackage[all,knot,arc,color,web]{xy}
\usepackage{graphicx}
\xyoption{curve}
\usepackage{bbold}
\usepackage{fancyhdr}
%%%% NUMERAZIONE IN SEZIONE
\theoremstyle{plain}
\newtheorem{theorem}{Theorem}[section]
\newtheorem{proposition}[theorem]{Proposition}
\newtheorem{corollary}[theorem]{Corollary}
\newtheorem{lemma}[theorem]{Lemma}

\theoremstyle{definition}
\long\def\comment#1{}

\title{Bialy-Mironov type rigidity for centrally symmetric symplectic billiards}
\author{Luca Baracco, Olga Bernardi, Alessandra Nardi}
\address{Dipartimento di Matematica Tullio Levi-Civita, Universit\`a di Padova, via Trieste 63, 35121 Padova, Italy}
\email{baracco@math.unipd.it, obern@math.unipd.it, nardi@math.unipd.it}
\date{}

\begin{document}
\maketitle
\begin{abstract}
The aim of the present paper is to establish a Bialy-Mironov type rigidity for centrally symmetric symplectic billiards. For a centrally symmetric $C^2$ strongly-convex domain $D$ with boundary $\partial D$, assume that the symplectic billiard map has a (simple) continuous invariant curve $\delta \subset \mathcal{P}$ of rotation number $1/4$ (winding once around $\partial D$) and consisting only of $4$-periodic orbits. If one of the parts between $\delta$ and each boundary of the phase-space is entirely foliated by continuous invariant closed (not null-homotopic) curves, then $\partial D$ is an ellipse. The differences with Birkhoff billiards are essentially two: it is possible to assume the existence of the foliation in one of the parts of the phase-space detected by the curve $\delta$, and the result is obtained by tracing back the problem directly to the totally integrable case. 
\end{abstract}
\section{Introduction}
Symplectic billiards were introduced by P. Albers and S. Tabachnikov \cite{ALB} in 2017, as a natural variation of Birkhoff billiards with the inner area --instead of the length-- as generating function. Let $D$ be a convex planar domain with boundary $\partial D$. Among the convex $n$-gons inscribed in $D$, let $P$ be the one of greatest area. It is well-known (see e.g. \cite[Page 9]{Toth}) that such a $P$ has the (necessarily) property that the tangent line at each vertex is parallel to the diagonal passing through the neighboring vertices. As a consequence, the dynamical formulation of the symplectic billiard map is the following: three points $x,y,z$ on $\partial D$ are three consecutive points of a symplectic billiard orbit if and only if the tangent at the second point $y$ is parallel to the line connecting $x$ and $z$. We refer to Figure \ref{mappa}. 
\begin{figure}
\centering
\includegraphics[scale=0.9]{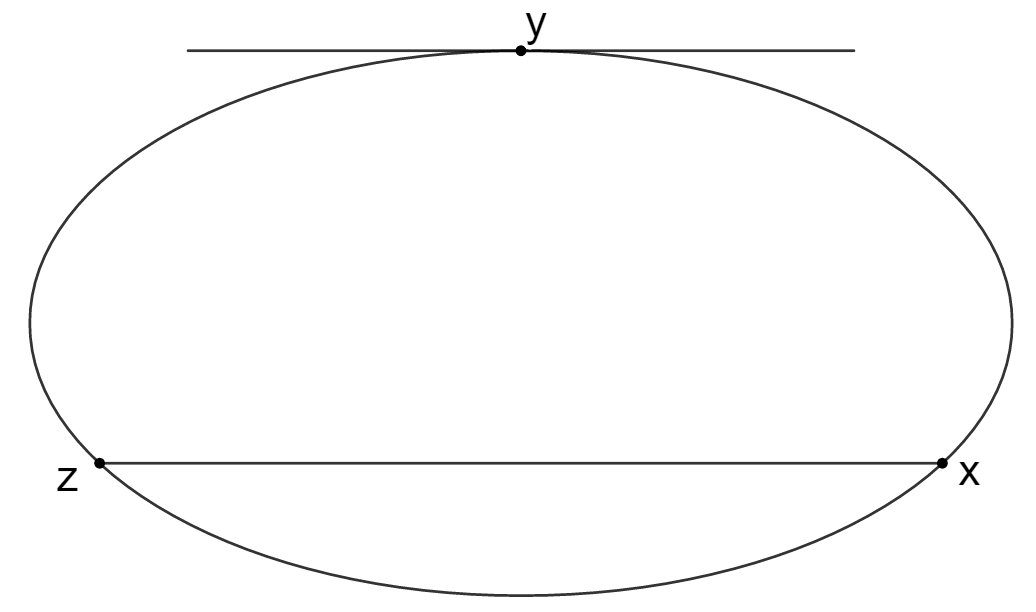}
\caption{The symplectic billiard map.}
\label{mappa}
\end{figure}
As proved in \cite[Section 2]{ALB}, see also Section \ref{DS} of the present paper, the symplectic billiard map turns out to be a twist map, preserving an area form; moreover, it commutes with affine transformations of the plane. \\
\indent Crucial questions for any billiard dynamics comprehend integrability, which means the existence of a regular (i.e. at least $C^0$) foliation of the phase-space consisting of invariant, not null-homotopic curves. There are different ways to define integrability, including local and/or perturbative notions. We refer to \cite{Alf} for an exhaustive discussion in the case of Birkhoff billiards. In particular, a billiard is called {\it totally --or full globally-- integrable} if the foliation fill the whole phase-space. The first celebrated result that {\it totally integrable Birkhoff billiards are circles} was proved by M. Bialy \cite{bi} in 1993. It is based on a generalization of Hopf's argument, constructing a (discrete) Jacobi field without conjugated points, and uses the usual planar isoperimetric inequality. We refer to \cite{Wo} for an alternative subsequent proof of this theorem based on genuine dynamical arguments. Finally, we notice that an additional quicker proof can also be performed by using different recent coordinates introduced in \cite[Section 3.1]{BIAMI}, see also \cite[Formula (14)]{BG}. In 2023, M. Bialy \cite{BO} establishes --with the same integral-geometric approach-- the corresponding rigidity phenomenon for outer billiards: {\it if an outer billiard is totally integrable, then the boundary of the billiard table is an ellipse}. It is worth noting that, in such a case, a specific generating function as well as the use of suitable weights --in order to overcome the non-compactness of the phase-space-- play a fundamental role. Finally, the result is reduced to the Blaschke-Santalo inequality. In the same year, the first two authors of the present paper proved \cite{BaBe} that {\it the only totally integrable symplectic billiards are ellipses} by using --beyond the well-consolidated integral approach coming from Hopf's method-- the affine equivariance of the symplectic billiard map in order to take back the problem to the isoperimetric inequality. \\
\indent It is consequently quite natural trying to apply the previous successful framework to search for other rigidity results, by relaxing the totally integrability assumption. In such a direction, a fundamental contribution is due to M. Bialy and A.E. Mironov \cite{BM} in 2022, proving the so-called Birkhoff-Poritsky conjecture for centrally symmetric $C^2$ strongly-convex (i.e. with positive curvature) domains $D$. Their main theorem can be stated as follows. {\it Assume that the Birkhoff billiard map of $\partial D$ has a (simple) continuous invariant curve of rotation number $1/4$ (winding once around the phase-space cylinder) and consisting only of $4$-periodic orbits. Moreover, suppose that the domain between this invariant curve and the boundary of the phase-space cylinder corresponding to the identity is entirely foliated by continuous invariant closed (not null-homotopic) curves. Then $\partial D$ is an ellipse}. As clearly explained by the authors, the main ingredients of the proof are the use of the non-standard generating function for convex Birkhoff billiards introduced in \cite{BIAMI}, the accurate study of the properties of the above $4$-periodic orbits and --in order to conclude-- the Hopf's approach combined with Wirtinger's inequality. We also recall the contemporary paper \cite{BT} by M. Bialy and D. Tsodikovich where billiard tables with rotational symmetries of order greater than $3$ (therefore  strengthening the centrally symmetric hypothesis) are considered. In this article, the geometric assumptions allow the authors to weaken the total integrability to a region of the phase-space (in particular, to a neighborhood of a specific invariant curve formed by periodic points whose rotation number is linked to the order of symmetry of the billiard table). The results, essentially based on the remarkable structure of the invariant curves as well as the billiard tables, apply to Birkhoff, outer, symplectic and Minkowski billiards. \\ 
\indent The aim of the present paper is to establish a Bialy-Mironov type rigidity for centrally symmetric symplectic billiards, as stated in the next theorem. In the sequel, we indicate $T: \mathcal{P} \to \mathcal{P}$ the symplectic billiard map and we refer to the beginning of Section \ref{DS} for details on the precise definition of the phase-space $\mathcal{P}$.  

\begin{theorem}\label{4-per} Let $D$ be a centrally symmetric $C^2$ strongly-convex domain with boundary $\partial D$. Assume that the symplectic billiard map $T: \mathcal{P} \to \mathcal{P}$ of $\partial D$ has a (simple) continuous invariant curve $\delta \subset \mathcal{P}$ of rotation number $1/4$ (winding once around $\partial D$) and consisting only of $4$-periodic orbits. If one of the parts between $\delta$ and each boundary of the phase-space $\mathcal{P}$ is entirely foliated by continuous invariant closed (not null-homotopic) curves, then $\partial D$ is an ellipse. 
%If a region between the curve of 4-periodic points and one of the boundaries of the phase-space is entirely foliated by invariant curves than the billiard table is an ellipse. 
\end{theorem}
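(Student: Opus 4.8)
The plan is to exploit the affine equivariance of the symplectic billiard map, which is the key structural feature distinguishing this setting from Birkhoff billiards and which — as the abstract promises — lets us trace the problem back to the totally integrable case already resolved in \cite{BaBe}. The central object is the invariant curve $\delta$ of rotation number $1/4$ consisting entirely of $4$-periodic orbits. For symplectic billiards, a $4$-periodic orbit is an inscribed quadrilateral of maximal area in which the tangent at each vertex is parallel to the diagonal joining the two neighboring vertices. I would first analyze this parallelism condition carefully: for a $4$-periodic orbit $x_1, x_2, x_3, x_4$, the tangent at $x_1$ is parallel to the chord $x_2 x_4$ and the tangent at $x_3$ is parallel to $x_2 x_4$ as well, so the tangent lines at opposite vertices are parallel; symmetrically the tangents at $x_2$ and $x_4$ are parallel. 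Combined with the central symmetry of $D$, I expect the $4$-periodic configurations to be forced into a rigid combinatorial form — parallelograms inscribed in $\partial D$ with vertices in centrally symmetric pairs.

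\smallskip

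Next I would use the central symmetry to show that each $4$-periodic orbit is an \emph{affinely regular} inscribed quadrilateral — i.e.\ the image of a square under a linear map — with center at the center of symmetry of $D$. The parallelism of opposite tangents together with the symmetry $x_3 = -x_1$, $x_4 = -x_2$ should pin down the vertices so that the tangent directions and the chord directions match those of an affinely regular $4$-gon. The crucial consequence is that the existence of a \emph{whole curve} $\delta$ of such orbits, winding once around $\partial D$, means that through every boundary point there passes an affinely regular inscribed quadrilateral. This is a strong constraint: it says that for each point $p \in \partial D$ the tangent direction at $p$ and the tangent direction at the point $q \in \partial D$ lying on the appropriate diagonal are compatible in the affinely-regular sense at every point of the boundary simultaneously.

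\smallskip

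Having extracted this pointwise geometric condition from $\delta$, the heart of the argument is to combine it with the one-sided foliation hypothesis. Here I would apply the affine equivariance: by an affine change of coordinates I may normalize $\delta$ (or the associated quadrilaterals) to a standard position, reducing to a situation where the invariant curve of $4$-periodic orbits corresponds to inscribed squares. The foliation of one of the two regions between $\delta$ and the boundary $\partial\mathcal{P}$ by invariant non-null-homotopic curves then provides, via the standard Hopf/Bialy integral argument (a discrete Jacobi field with no conjugate points) specialized to the symplectic generating function, an inequality that becomes an equality precisely on ellipses — exactly as in \cite{BaBe}. The novelty the abstract emphasizes — that the foliation may be assumed on \emph{either} of the two parts, and that the conclusion is reached by reduction directly to total integrability rather than by a fresh isoperimetric computation — suggests the following reduction: the affinely-regular structure on $\delta$ forces the billiard dynamics to extend symmetrically, so that a foliation on one side propagates (through the $4$-periodic symmetry) to effectively yield total integrability of a normalized billiard, whence \cite{BaBe} applies and $\partial D$ is an ellipse.

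\smallskip

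The main obstacle I anticipate is the step translating the one-sided foliation into the full-foliation (totally integrable) hypothesis of \cite{BaBe}. Showing that the presence of $\delta$ — a curve of $4$-periodic affinely-regular orbits — together with a foliation on just one side forces the complementary side to be foliated as well requires a careful use of the central symmetry and of the involutive structure of the billiard map around the $1/4$ rotation-number curve. I would expect to need a reflection or symmetry argument (a conjugation by the central symmetry, or by the composition of the billiard map with itself) that maps invariant curves on one side to invariant curves on the other, exploiting that $4$-periodicity makes $T^2$ an involution on $\delta$. Making this propagation rigorous — ensuring the transported curves are genuine continuous invariant curves and that the foliation glues across $\delta$ — is where the delicate analysis lies; the subsequent appeal to \cite{BaBe} and the concluding rigidity should then follow more directly.
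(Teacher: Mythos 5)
Your analysis of the $4$-periodic orbits is sound and matches the paper's Proposition \ref{parallelogramma}: parallel opposite tangents plus central symmetry force $x_3=-x_1$, $x_4=-x_2$, so each orbit on $\delta$ is an inscribed parallelogram centered at the origin and $\partial D$ is a Radon curve. The genuine gap is in your central step, the ``propagation'' of the foliation from one side of $\delta$ to the other so as to invoke the theorem of \cite{BaBe} as a black box. Neither mechanism you suggest can do this. The central symmetry acts on the phase space as $(t,s)\mapsto(t+\pi,s+\pi)$, and since the lift of $\delta$ satisfies $\Phi(t+\pi)=\Phi(t)+\pi$ (a consequence of \eqref{iter}), this map leaves $\delta$ and each of the two regions $\Omega_{\gamma\delta}$, $\Omega_{\delta\gamma^*}$ invariant --- it does not exchange them. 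Likewise $T^2$ preserves every invariant curve, so it cannot transport curves across $\delta$; it is an involution only when restricted to $\delta$ itself, not on any neighborhood. The natural candidate that does swap the two regions, $(t_1,t_2)\mapsto(t_2,t_1+\pi)$, fails to conjugate $T$ to $T^{\pm1}$ (a direct check with the defining condition $\omega(\gamma'(t_2),\gamma(t_3)-\gamma(t_1))=0$ shows the images of consecutive pairs are not consecutive). So total integrability is never established, and in fact producing such a conjugacy would essentially amount to proving the rigidity one is after.

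What the paper actually does is a reduction at the level of integrals, not of dynamics. Bialy's Hopf-type inequality (Proposition \ref{necessaria}) is applied directly to whichever region is foliated, giving \eqref{eccola} or \eqref{eccola1}; then the antisymmetries of the generating function in the centrally symmetric case ($L(t_1+\pi,t_2)=-L(t_1,t_2)$, $L_{11}(t_1,t_2)=-L_{22}(t_2,t_1)$, $L_{12}(t_1,t_2)=-L_{12}(t_2,t_1)$) yield Lemma \ref{primo lemma}: the integral over either region equals the integral over $[0,\pi]^2$, i.e.\ one quarter of the integral over the full square $[0,2\pi]^2$. This recovers exactly the inequality of the totally integrable case, after which the computational endgame of \cite{BaBe} (support-function identities, an affine normalization killing the $\cos(2\alpha)$ and $\sin(2\alpha)$ modes, and the isoperimetric inequality) is repeated rather than cited as a black box. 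A secondary inaccuracy in your proposal: a single affine map can turn one parallelogram of $\delta$ into a square, but not all of them simultaneously (that would already force $\partial D$ to be an ellipse), so the normalization ``$\delta$ corresponds to inscribed squares'' is not available either.
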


\noindent We stress that, differently from Birkhoff case, the symmetric properties of the generating function for symplectic billiards in centrally symmetric tables allow to assume the existence of the foliation of invariant curves in one of the parts of the phase-space detected by the curve $\delta$. Moreover, another difference from the ``classical case'' is that the proof of the above theorem is based on some technical lemmas (similar to Lemmas of \cite[Section 4]{BaBe}) which allow us to trace back the problem to the totally integrable case studied in \cite{BaBe}. On the contrary, the Birkhoff case needs ad hoc arguments as explained right above. As a consequence, the techniques used to prove Theorem \ref{4-per} don't fit if we consider smaller regions, e.g. by replacing $\delta$ with an invariant curve of rotation number $1/2^n$ with $n \ge 3$ or if we investigate on other billiard dynamics. We refer for example to the outer billiard case, which is particular both for the non-compactness of the phase-space and for the expression of the generating function. \\
\indent Straightforward consequences of Theorem \ref{4-per} are the following corollaries (corresponding respectively to Corollaries 1.3 and 1.2 in \cite{BM}). 

\begin{corollary}
    If the symplectic billiard map $T:\mathcal{P}\to\mathcal{P}$ has a $C^1$ first integral with non vanishing gradient on one of the closed parts between $\delta$ and each boundary of the phase-space $\mathcal{P}$, then $\partial D$ is an ellipse.
\end{corollary}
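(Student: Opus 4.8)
The strategy is to show that a $C^1$ first integral $F$ with nowhere-vanishing gradient produces, on the relevant part of the phase-space, exactly the foliation required by Theorem \ref{4-per}, and then to invoke that theorem. Write $R\subset\mathcal{P}$ for the closed part bounded by $\delta$ and one boundary component $\beta$ of $\mathcal{P}$; topologically $R$ is a closed annulus, and $F\in C^1(R)$ satisfies $F\circ T=F$ with $\nabla F\neq 0$ on all of $R$. The first-integral identity makes every level set $F^{-1}(c)$ invariant under $T$, while $\nabla F\neq 0$ means $F$ has no critical points, so by the regular value theorem each $F^{-1}(c)$ is a $C^1$ one-dimensional submanifold and the level sets foliate $R$. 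It remains only to check that the leaves are closed curves which are not null-homotopic, so that the hypotheses of Theorem \ref{4-per} are met.

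That the compact leaves are essential is a standard index argument: a null-homotopic simple closed leaf would bound a disk $D'\subset R$, and the continuous function $F|_{D'}$ would then attain an interior extremum or be constant on $D'$, so that $\nabla F$ would vanish somewhere in $D'$, a contradiction. Hence every compact leaf winds once around $R$, exactly as $\delta$ does. The point that I expect to be the main obstacle is to rule out non-compact leaves, i.e. arcs of level sets abutting the boundary, so that the foliation is genuinely by closed curves all the way up to $\delta$. This reduces to showing that $F$ is constant along $\delta$ and along $\beta$: once $F|_\delta\equiv a$ and $F|_\beta\equiv b$, the absence of interior critical points forces $a\neq b$ and exhibits $F$ as a submersion of $R$ onto $[a,b]$ whose fibres are precisely the essential closed invariant curves, with $\delta$ and $\beta$ as the extreme leaves.

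To obtain the constancy on $\delta$ I would exploit the very special nature of this curve. Since $\delta$ carries rotation number $1/4$ and consists entirely of $4$-periodic orbits, $T^4$ restricts to the identity on $\delta$, so every $q\in\delta$ is a fixed point of $T^4$; as $T$ preserves an area form, $\det dT^4_q=1$, and since $dT^4_q$ already fixes the tangent direction $T_q\delta$ its eigenvalues are both $1$, whence $dT^4_q$ is either the identity or a nontrivial shear with unique invariant direction $T_q\delta$. The twist property of $T$ excludes the identity, leaving a genuine shear. Now the leaf through $q$ is $T^4$-invariant and $C^1$, so its tangent at $q$ must be an invariant direction of $dT^4_q$, hence equal to $T_q\delta$; the leaf is therefore tangent to $\delta$ at $q$, which means the tangential derivative of $F$ along $\delta$ vanishes at $q$. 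As $q\in\delta$ is arbitrary, $F|_\delta$ is constant, and the same argument applies on $\beta$. With this established, $R$ is entirely foliated by continuous, invariant, non-null-homotopic closed curves, and Theorem \ref{4-per} yields that $\partial D$ is an ellipse.
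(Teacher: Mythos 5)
Your overall strategy coincides with the paper's: the level sets of $F$ are to supply the foliation by invariant, essential closed curves, and Theorem \ref{4-per} then finishes the proof. The paper disposes of this in one sentence, and you are right that the only substantive point is the behaviour of the leaves at the two boundary curves of the region. However, your resolution of that point has a genuine gap at its pivotal step. You claim that for $q\in\delta$ the differential $dT^4_q$ cannot be the identity because ``the twist property of $T$ excludes the identity.'' This is not a valid deduction: the twist condition is a pointwise condition on $dT$ (positivity of an off-diagonal entry, equivalently $L_{12}>0$), and a product of four area-preserving matrices each satisfying it can perfectly well equal the identity --- for instance four rotations by $\pi/2$, each of which has positive upper-right entry and determinant $1$. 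So nothing in your argument excludes $dT^4_q=\mathrm{id}$ at some (or even all) points of $\delta$; at such points the leaf through $q$ need not be tangent to $\delta$, and the constancy of $F|_\delta$ --- the heart of your boundary analysis --- is no longer established. Excluding the identity is a real issue, and the natural route is not the twist condition alone but Aubry--Mather-type facts: orbits on an invariant circle of a twist map are action minimizing, minimizing orbits have no conjugate points, and $dT^4_q=\mathrm{id}$ would force the Jacobi field starting vertically at $q$ to return to the vertical after four steps. None of this appears in your proof.

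There are two further problems in the same part of the argument. First, $\delta$ is only assumed continuous (Birkhoff's theorem upgrades this to Lipschitz, hence differentiable only almost everywhere), so the tangent direction $T_q\delta$ you use need not exist at every $q$; the argument can be patched (almost-everywhere vanishing of the derivative of the Lipschitz function $F|_\delta$, plus absolute continuity), but as written it assumes smoothness $\delta$ does not have. Second, ``the same argument applies on $\beta$'' is not justified: if $\beta=\gamma$ is the diagonal, then $T|_\gamma=\mathrm{id}$ and the twist degenerates there, since $L_{12}(t,t)=\omega(\gamma'(t),\gamma'(t))=0$; thus $T$ need not even be $C^1$ up to that boundary, and $dT_q$ there could a priori be the identity, so your shear argument does not transfer; the $2$-periodic boundary $\gamma^*$ likewise needs separate treatment. (A smaller, fixable point: when a level set has several circle components, $T$ could a priori permute them, so invariance of each individual leaf requires the observation that $T$ preserves the linear order of disjoint essential circles in the annulus.) In short, you have correctly located the difficulty that the paper's one-line proof leaves implicit, but the key claim on which your treatment of it rests is unsupported, and false as a general statement about twist maps.
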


\noindent In fact, a $C^1$  first integral with non vanishing gradient in a region of the phase-space induces a foliation by continuous invariant closed (not null-homotopic) curves. Since a boundary of this region is, by hypothesis, the curve $\delta$, Theorem \ref{4-per} immediately applies. 
\begin{corollary} \label{1 e 2}
Suppose that one of the next two hypotheses holds.
\begin{itemize}
\item[$1.$] A neighborhood of $\partial D$ is $C^1$-foliated by convex caustics of rotation numbers $(0,1/4]$;
\item[$2.$] A neighborhood of the center of symmetry of $D$ is $C^1$-foliated by convex caustics of rotation numbers $[1/4,1/2)$.
\end{itemize}
Then $\partial D$ is an ellipse.
\end{corollary}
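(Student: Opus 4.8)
The plan is to deduce both implications directly from Theorem \ref{4-per}, by translating the hypotheses on convex caustics into statements about invariant curves of the symplectic billiard map $T$ in the phase-space $\mathcal{P}$. The first step is to invoke the standard dictionary between caustics and invariant curves: a convex caustic of rotation number $\rho$ is the envelope of the chords of the symplectic billiard orbits whose lift to $\mathcal{P}$ is a (not null-homotopic) invariant curve of rotation number $\rho$, and conversely each such continuous invariant curve of $T$ projects to a convex caustic of the same rotation number. Under this correspondence, a $C^1$-foliation by convex caustics of an annular region of $D$ adjacent to $\partial D$ (resp. to the center of symmetry) yields a foliation by continuous, not null-homotopic invariant curves of the corresponding part of $\mathcal{P}$.

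Next I would identify the curve $\delta$ demanded by Theorem \ref{4-per} with the caustic of rotation number $1/4$, which occurs as an endpoint of the foliating interval in both hypotheses. Since the lift of this caustic is a genuine continuous invariant circle, the restriction of $T$ to it is conjugate to the rigid rotation by its rotation number; as this number equals $1/4$, every orbit on $\delta$ is $4$-periodic. Hence $\delta$ is a simple continuous invariant curve of rotation number $1/4$, winding once around $\partial D$ and consisting only of $4$-periodic orbits, exactly as required in the statement of Theorem \ref{4-per}.

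It then remains to check that the foliated region lies on the correct side of $\delta$ and fills one of the two parts of $\mathcal{P}$. Recalling that in a centrally symmetric domain the rotation number tends to $0$ as caustics approach $\partial D$ and to $1/2$ as caustics shrink to the center of symmetry, Hypothesis~$1$ gives caustics of rotation numbers $(0,1/4]$ sweeping the annulus between $\partial D$ and the $1/4$-caustic, which corresponds to the part of $\mathcal{P}$ between the boundary of rotation number $0$ and $\delta$; Hypothesis~$2$ gives caustics of rotation numbers $[1/4,1/2)$ sweeping the region between the $1/4$-caustic and the center, corresponding to the part of $\mathcal{P}$ between $\delta$ and the boundary of rotation number $1/2$. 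In either case one of the two parts detected by $\delta$ is entirely foliated by continuous invariant closed (not null-homotopic) curves, so Theorem \ref{4-per} applies and $\partial D$ is an ellipse.

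The main obstacle I anticipate is making the caustic/invariant-curve dictionary fully rigorous at the two extremes of the rotation-number interval: matching the limiting values $0$ and $1/2$ with the two boundary components of $\mathcal{P}$, and verifying that the $C^1$ regularity of the caustic foliation descends to the mere continuity of the invariant-curve foliation hypothesized in Theorem \ref{4-per}. Once this correspondence is secured, each corollary is an immediate application of the theorem.
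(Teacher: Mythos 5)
Your overall strategy --- translating the caustic foliation into a foliation by invariant curves of $T$ and then invoking Theorem \ref{4-per}, with Hypothesis 1 foliating the part of $\mathcal{P}$ between $\gamma$ and $\delta$ and Hypothesis 2 the part between $\delta$ and $\gamma^*$ --- is the same as the paper's. But there is a genuine gap at the step where you claim that, since $\delta$ is a continuous invariant circle of rotation number $1/4$, the restriction of $T$ to $\delta$ is ``conjugate to the rigid rotation'' and hence every orbit on $\delta$ is $4$-periodic. This is false for general circle homeomorphisms with rational rotation number: such a map must have \emph{some} periodic orbits, but it need not be conjugate to a rotation, and it can have non-periodic orbits that are asymptotic (forward and backward) to periodic ones. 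Conjugacy to a rotation is a Denjoy-type conclusion that simply does not hold in the rational case without additional input. Since ``$\delta$ consists only of $4$-periodic orbits'' is one of the standing hypotheses of Theorem \ref{4-per}, your argument cannot invoke the theorem at this point.

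This is precisely where the $C^1$ regularity of the caustic foliation is used in the paper --- and not, as your closing paragraph suggests, as a surplus of smoothness that merely ``descends'' to the continuity required of the invariant curves. The paper's argument runs: because the foliation is $C^1$, the $T$-invariant area form $\Omega$ induces an absolutely continuous invariant measure on each leaf; on the leaf of rotation number $1/4$, any non-periodic orbit would have to be asymptotic to a periodic one (citing \cite[Propositions 11.1.4 and 11.2.2]{Ka}), which would force every invariant measure on that leaf to concentrate on the periodic points, contradicting absolute continuity. Hence all orbits on $\delta$ are $4$-periodic, and only then do the hypotheses of Theorem \ref{4-per} hold. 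Your proposal needs this (or an equivalent) measure-theoretic argument to be complete; the rest of it matches the paper's reasoning.
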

\noindent The above corollary follows from these facts. By a standard argument, a foliation of differentiable convex caustics of given rotation numbers corresponds to a foliation of continuous invariant closed (not null-homotopic) curves with the same rotation numbers (see e.g. \cite[Pages 44-45]{Si}). In order to conclude the existence of the curve $\delta$, consisting only of periodic orbits, we need the foliation to be $C^1$. In fact, in such a case, the area form preserved by $T$ induces an absolute continuous invariant measure on each leaf of the foliation. The invariance of such a measure assures that every invariant curve of rational rotation number is made up only of periodic orbits (non periodic orbits of rational rotation number should be asymptotic to periodic ones, \cite[Propositions 11.1.4. and 11.2.2.]{Ka}). In particular, hypothesis $1.$ or $2.$ of Corollary \ref{1 e 2} implies the ones of Theorem \ref{4-per}. \\
\indent The sequel is organized as follows. In Section \ref{DS} we present symplectic billiards and their main properties. In Section \ref{radon} we discuss the consequences on the geometry of $\partial D$ given by the existence of the curve $\delta$ and we recall the integral-geometric inequality to the base of the proof of Theorem \ref{4-per}. Section \ref{TEC} --corresponding to \cite[Section 4]{BaBe}-- is devoted to technical facts on integrals involving the area form. Finally, Section \ref{fine} concludes the proof of Theorem \ref{4-per}. \\
~\newline
\textbf{Acknowledgments.} We are thankful to Misha Bialy and Alfonso Sorrentino for the interesting and useful discussions on mathematical billiards.

\section{The dynamical system} \label{DS}
Let $D$ be a $C^2$ strictly-convex planar domain with boundary $\partial D$. Fixed the positive counter-clockwise orientation, let $\gamma:\mathbb{S}^1 \to\partial D$ be a smooth parametrization of $\partial D$ where $\mathbb{S}^1 := \mathbb{R}/2\pi \mathbb{Z}$. Since $D$ is strictly-convex, for every point $\gamma(t) \in \partial D$, there exists a unique point $\gamma(t^*) \in \partial D$ such that 
$$\omega(\gamma'(t),\gamma'(t^*)):= \det (\gamma'(t),\gamma'(t^*)) =0,$$ 
where --in the above formula-- $\gamma'(t)$ and $\gamma'(t^*)$ denote the tangent vectors at $\partial D$ in $\gamma(t)$ and $\gamma(t^*)$ respectively. \\
\indent Taking into account the orientation of $\partial D$, we define the (open, positive) phase-space $\mathcal{P}$ and the symplectic billiard dynamics $T$ as follows. 
$$\mathcal{P} := \{ (\gamma(t), \gamma(s)) \in \partial D \times \partial D: \ \gamma(t) < \gamma(s) < \gamma(t^*)\}$$ 
and 
$$T: \mathcal{P} \rightarrow \mathcal{P}, \qquad (\gamma(t_1),\gamma(t_2)) \mapsto (\gamma(t_2),\gamma (t_3))$$
where $\gamma(t_3)$ is the unique point satisfying 
$$\omega( \gamma^{\prime} (t_2),\gamma(t_3)-\gamma(t_1))=0.$$
Here below we collect some properties of the symplectic billiard map (see \cite[Section 2.1]{ALB} for all details).
\begin{itemize}
\item[$1.$] $T$ is continuous and can be continuosly extended to $\bar{\mathcal{P}}$ so that 
$$T(\gamma(t),\gamma(t)) = (\gamma(t),\gamma(t)) \quad \text{and} \quad T(\gamma(t),\gamma(t^*)) = (\gamma(t^*),\gamma(t)).$$ 
In particular, the extension $T(\gamma(t),\gamma(t^*)) = (\gamma(t^*),\gamma(t))$ is characterized by $2$-periodicity (see \cite[Lemma 2.3]{ALB}). In the sequel, we denote by
$$\gamma :=\{(\gamma(t),\gamma(t)): \ t \in \mathbb{S}^1\} \quad \text{and} \quad \gamma^* :=\{(\gamma(t),\gamma(t^*)): \ t \in \mathbb{S}^1\}$$
the two boundaries of the phase-space $\mathcal{P}$. 
\item[$2.$] As proved in \cite[Lemma 2.5]{ALB}, the area form 
$$\omega: \mathcal{P} \to \mathbb{R}, \qquad (t_1,t_2) \mapsto \omega(\gamma(t_1),\gamma(t_2))$$
is a generating function for $T$. 
\item[$3.$] According to \cite[Lemma 2.7]{ALB}, the symplectic billiard map is a twist map preserving the $2$-form 
\begin{equation} \label{2 forma}
\Omega := \omega(\gamma'(t_1),\gamma'(t_2)) dt_1 dt_2,
\end{equation}
that is $T^*\Omega = \Omega$.  
\item[$4.$] The symplectic billiard map commutes with affine transformations of the plane, since they preserve the tangent directions. 
\end{itemize}
\section{Invariant curve of $4$-periodic orbits} \label{radon}
From now on, $D$ is a centrally symmetric $C^2$ strongly-convex (i.e. with curvature $>0$) domain and we assume the center of symmetry of $D$ 
to be the origin. In such a case, beyond general properties $1.$ -- $4.$ recalled in Section \ref{DS}, we also have
\begin{itemize}
\item[$5.$] $\omega(\gamma(t_1 + \pi),\gamma(t_2)) = - \omega(\gamma(t_1),\gamma(t_2))$ and $\omega(\gamma(t_1),\gamma(t_2 + \pi)) = - \omega(\gamma(t_1),\gamma(t_2))$.
\end{itemize}
The fact that --in the centrally symmetric case-- the generating function changes sign by adding $\pi$ (half period) to one of the variables is a special property of symplectic billiards. In fact, in other billiards (e.g. Birkhoff, outer, fourth billiards, see \cite{ALB1} for details), the corresponding generating function depends also on the shape of the boundary between two points. \\
%\indent The proof of the next proposition is analogous to the one of \cite[Theorem $4.1(A)$]{BM} and we recall it for reader's convenience.} 
\indent The next proposition is essentially contained in \cite[Section 2.4.2.]{ALB}.
\begin{proposition} \label{parallelogramma}
Let $D$ be a centrally symmetric billiard table. Assume that the symplectic billiard map $T: \mathcal{P} \to \mathcal{P}$ of $\partial D$ has a (simple) continuous invariant curve $\delta \subset \mathcal{P}$ of rotation number $1/4$ (winding once around $\partial D$) and consisting only of $4$-periodic orbits. Then each quadrilateral corresponding to the invariant curve $\delta$ is a parallelogram. In particular, $\partial D$ is a Radon curve.
\end{proposition}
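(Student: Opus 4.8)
The plan is to reduce the statement to a purely geometric property of the four points of a single orbit, using the dynamical formulation of the map together with the central symmetry of $\partial D$. Fix an orbit on $\delta$ and denote its four vertices, in cyclic order along $\partial D$, by $P_1,P_2,P_3,P_4$. By the definition of the symplectic billiard map --- the tangent at the middle of three consecutive points is parallel to the segment joining the outer two --- the four periodicity conditions read: the tangent at $P_1$ is parallel to $P_2-P_4$, the tangent at $P_2$ is parallel to $P_3-P_1$, the tangent at $P_3$ is parallel to $P_4-P_2$, and the tangent at $P_4$ is parallel to $P_1-P_3$. In particular the tangents at $P_1$ and $P_3$ are both parallel to the diagonal $P_2P_4$, while the tangents at $P_2$ and $P_4$ are both parallel to the diagonal $P_1P_3$.

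The first key step is the observation that, on a centrally symmetric strongly-convex curve, two distinct points carrying parallel tangent lines are necessarily antipodal. Indeed, since the curvature is strictly positive, the direction $\theta(t):=\arg\gamma'(t)$ is a strictly increasing degree-one map of $\mathbb{S}^1$, hence a homeomorphism; therefore two distinct parameters $t\neq t'$ give parallel tangents only if $\theta(t')=\theta(t)+\pi$. On the other hand central symmetry gives $\gamma(t+\pi)=-\gamma(t)$, whence $\gamma'(t+\pi)=-\gamma'(t)$ and $\theta(t+\pi)=\theta(t)+\pi$; injectivity of $\theta$ then forces $t'=t+\pi$, i.e. $\gamma(t')=-\gamma(t)$. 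Applying this to the two pairs found above (recall $P_1\neq P_3$ and $P_2\neq P_4$, since the four vertices are distinct) yields $P_3=-P_1$ and $P_4=-P_2$. Consequently both diagonals $P_1P_3$ and $P_2P_4$ have the origin as midpoint, so they bisect each other and the quadrilateral $P_1P_2P_3P_4$ is a parallelogram, which proves the first assertion.

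For the second assertion, I would substitute $P_3=-P_1$ and $P_4=-P_2$ into the periodicity conditions: the tangent at $P_1$ is then parallel to $P_2-P_4=2P_2$ and the tangent at $P_2$ is parallel to $P_3-P_1=-2P_1$. Thus the pair $(P_1,P_2)$ enjoys the symmetric relation that the tangent at $P_1$ is parallel to the position vector $P_2$ \emph{and} the tangent at $P_2$ is parallel to the position vector $P_1$. Because $\delta$ is a continuous invariant curve of rotation number $1/4$ winding once around $\partial D$, as the orbit varies along $\delta$ the vertex $P_1$ sweeps the whole boundary; hence every point of $\partial D$ admits a conjugate point for which this symmetric tangency holds. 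This is exactly the defining property of a Radon curve (equivalently, the symmetry of Birkhoff orthogonality for the norm whose unit circle is $\partial D$), so $\partial D$ is a Radon curve. The main technical point to secure is the antipodality claim --- that the two points with a prescribed tangent direction are genuinely $\pm$ of one another, for which strong convexity and central symmetry are both essential --- together with checking that the sweep of $P_1$ along $\delta$ indeed covers all of $\partial D$, so that the symmetric conjugacy is global rather than holding only on a single orbit.
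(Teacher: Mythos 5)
Your proof is correct; the parallelogram half coincides in substance with the paper's argument, but your derivation of the Radon property takes a genuinely different route. For the first half, both you and the paper use the same two facts: the $4$-periodicity conditions force the tangents at opposite vertices to be parallel, and on a centrally symmetric strongly-convex curve the point with tangent parallel to a given tangent is unique and antipodal, whence $P_3=-P_1$ and $P_4=-P_2$. For the Radon property, the paper argues via areas: the area of each parallelogram is half the action along the orbit, the action is constant along the invariant curve $\delta$, and therefore every point of $\partial D$ is a vertex of an inscribed parallelogram of maximal area, which is the characterization of Radon curves cited from Martini--Swanepoel. You instead read off from the periodicity conditions, after substituting $P_3=-P_1$ and $P_4=-P_2$, the symmetric conjugacy --- tangent at $P_1$ parallel to the position vector of $P_2$ and tangent at $P_2$ parallel to the position vector of $P_1$ --- and you globalize it via the degree-one (hence surjective) projection of $\delta$ onto $\partial D$; this is the symmetry of Birkhoff orthogonality, i.e. the original definition of a Radon curve. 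Your route is more elementary and self-contained: it bypasses both the fact (asserted without proof in the paper, though standard) that the action is constant along an invariant curve consisting of periodic orbits, and the maximal-area characterization of Radon curves, using only pointwise information from individual orbits. What the paper's route buys is the extra geometric information that all these parallelograms have the same, maximal, area, though this is not used in the remainder of the paper. One small point you should make explicit: to upgrade ``for every $p$ there exists $q$ with the symmetric tangency'' to the full symmetry of Birkhoff orthogonality, you need that the only points of $\partial D$ whose position vector is parallel to the tangent at $p$ are the two intersections $\pm q$ of the corresponding line through the origin with $\partial D$, and that the tangents at $q$ and $-q$ are parallel by central symmetry; both facts are immediate, so this is a cosmetic rather than a genuine gap.
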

\begin{proof}
Let consider a $4$-periodic orbit in $\delta$, $\{ \gamma(t_i)\}_{i = 1}^4$. As a consequence of the symplectic billiard dynamics, we have
$$\omega(\gamma'(t_1),\gamma'(t_3)) = 0 \quad \text{and} \quad \omega(\gamma'(t_2),\gamma'(t_4)) = 0.$$
By the centrally symmetric hypothesis, it follows
$$\gamma(t_3) = \gamma(t_1 + \pi) \quad \text{and} \quad \gamma(t_4) = \gamma(t_2 + \pi).$$
This means that the (inscribed) quadrilateral with vertices in $\{ \gamma(t_i)\}_{i = 1}^4$ is a parallelogram whose diagonals intersect in the origin (center of symmetry). Notice that the area of this parallelogram is half the action along $\{ \gamma(t_i)\}_{i = 1}^4$, which is constant for the invariant curve $\delta$. As a consequence, every $4$-periodic orbit for the symplectic billiard dynamics corresponds to an inscribed parallelogram having maximal area passing through a given vertex. In particular, $\partial D$ is a Radon curve (according to \cite[Beginning of Section 3]{MS}).
\end{proof}
\noindent The simplest example of Radon curve is the ellipse. However, there are recent constructions of Radon curves based on methods both from Plane Minkowski Geometry \cite{Ra1} and from Convex Geometry \cite{BMT}. \\
%Notice that the proof of the analogous result for Birkhoff billiards is given in \cite[Theorem $4.1(A)$]{BM} by means of a more general argument. In fact, it is based on Aubry-Mather theory for twist maps and could also be applied to our case.\\
\indent Let $\phi: \mathbb{S}^1 \to \mathbb{S}^1$ be the function induced by $\delta$ and $\Phi$ the lift of $\phi$ to $\mathbb{R}$. Then, as a consequence of Proposition \ref{parallelogramma}:
\begin{equation}\label{iter}
    \Phi^2(t)=t+\pi, \quad \Phi^3(t)=\Phi(t)+\pi \quad \text{and} \quad \Phi^4(t) = t + 2\pi.
    \end{equation}
In particular, $\Phi(t + \pi) = \Phi^3(t) = \Phi(t) + \pi$. Moreover, let $\Omega_{\gamma \delta}$ be the region in $[0,2\pi]^2$ corresponding to the part of the phase-space between $\gamma$ and the curve $\delta$:
$$\Omega_{\gamma \delta} := \{(t,s): \ t \in [0,2\pi] \text{ and } t \le s \le \Phi(t)\}.$$
Similarly, 
$$\Omega_{\delta\gamma^*} := \{(t,s): \ t \in [0,2\pi] \text{ and } \Phi(t) \le s \le t + \pi\}.$$
\indent From now on, $L(t_1,t_2) := \omega(\gamma(t_1),\gamma(t_2))$ denotes the generating function for $\Phi$ and $L_{ij}$ (for $i,j=1,2$) are the usual partial derivatives:
$$L_{11}(t_1,t_2) = \omega(\gamma''(t_1),\gamma(t_2)), \quad L_{22}(t_1,t_2) = \omega(\gamma(t_1),\gamma''(t_2)) \quad \text{and} \quad  L_{12}(t_1,t_2) = \omega(\gamma'(t_1),\gamma'(t_2)).$$
Therefore,
\begin{equation*} \label{estimate}
|L_{11}|, |L_{22}|, L_{12} \le K
\end{equation*}
where $K$ is a constant depending on the $C^2$-norm of $\gamma$. \\
The core of the next proposition is a nowadays well-consolidated argument by Bialy, based on the construction of a (discrete) Jacobi field without conjugated points. In the inequality's proof, the twist condition as well as the invariance of $\Omega$ with respect to $T$ play a crucial role. We refer to \cite[Section 3]{bi} for a detailed proof, also sketched in \cite[Section 3]{BaBe}. 
\begin{proposition} \label{necessaria}
Let $D$ be a $C^2$ strongly-convex billiard table. Suppose that the symplectic billiard map $T: \mathcal{P} \to \mathcal{P}$ of $\partial D$ has two (simple) continuous invariant closed (not null-homotopic) curves $\alpha$ and $\beta$, $\alpha < \beta$. Let $\Omega_{\alpha\beta}$ be the region in $[0,2\pi]^2$ corresponding to the part of the phase-space between $\alpha$ and $\beta$, which we assume to be foliated by continuous invariant closed (not null-homotopic)
curves. Then the following inequality holds:
    \begin{equation}\label{dis}
       \iint_{\Omega_{\alpha\beta}} [L_{11}(t_1,t_2)+2L_{12}(t_1,t_2)+L_{22}(t_1,t_2)] \, d\Omega\leq0.
   \end{equation}
\end{proposition}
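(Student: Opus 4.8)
\emph{Overall strategy.} The plan is to run Bialy's Hopf-type argument \cite{bi} for the symplectic generating function $L(t_1,t_2)=\omega(\gamma(t_1),\gamma(t_2))$, deriving an inequality on each leaf of the foliation and then integrating it over the foliation by disintegrating the invariant form $\Omega$. By property $2.$ of Section \ref{DS}, $L$ generates $T$, so a configuration $\{t_n\}$ is an orbit exactly when $L_2(t_{n-1},t_n)+L_1(t_n,t_{n+1})=0$, which unwinds (using the antisymmetry of $\omega$) to the dynamical condition $\omega(\gamma'(t_n),\gamma(t_{n+1})-\gamma(t_{n-1}))=0$. Linearising this relation along an orbit gives the discrete Jacobi equation
$$L_{12}(t_{n-1},t_n)\,J_{n-1}+\bigl[L_{22}(t_{n-1},t_n)+L_{11}(t_n,t_{n+1})\bigr]J_n+L_{12}(t_n,t_{n+1})\,J_{n+1}=0,$$
whose solutions measure the infinitesimal spreading of nearby orbits.

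\emph{Leafwise inequality.} Next I would use that $\Omega_{\alpha\beta}$ is foliated by invariant curves, which by Birkhoff's theorem are Lipschitz graphs $s=w(t)$. On each leaf the induced circle map $\psi(t)=w(t)$ preserves a probability measure $\mu$, and differentiating the family of orbits $t_n=\psi^n(t_0)$ with respect to the basepoint $t_0$ produces the Jacobi field $J_n=\prod_{k<n}\psi'(t_k)>0$; hence the ratio $\rho_n:=J_{n+1}/J_n=\psi'(t_n)$ is positive, which is the ``no conjugate points'' input. Writing $A(t)=L_{12}(t,\psi(t))$, $P(t)=L_{11}(t,\psi(t))$ and $Q(t)=L_{22}(t,\psi(t))$, the Jacobi equation at $t$ reads $A(\psi^{-1}t)/\psi'(\psi^{-1}t)+Q(\psi^{-1}t)+P(t)+A(t)\psi'(t)=0$. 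Integrating against $d\mu$ and using $\psi_*\mu=\mu$ to absorb the shifts by $\psi^{-1}$, it becomes $\int A\,(\psi'+1/\psi')\,d\mu+\int(P+Q)\,d\mu=0$. Since $A=\omega(\gamma'(t),\gamma'(\psi(t)))>0$ by the twist condition and $\psi'+1/\psi'\ge2$, this forces the leafwise bound $\int_{\mathrm{leaf}}\bigl[L_{11}+2L_{12}+L_{22}\bigr]\,d\mu\le0$.

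\emph{From leaves to the region.} To obtain \eqref{dis} I would disintegrate $\Omega=L_{12}\,dt_1\,dt_2$ over the foliation: since $\Omega$ is $T$-invariant and the leaves are invariant, the resulting conditional measures on the leaves are invariant under the induced maps, hence coincide up to normalisation with the measures $\mu$ above (on leaves of rational rotation number one argues identically with the invariant measure supported on the periodic orbits, where the same convexity step $\psi'+1/\psi'\ge2$ applies to the associated periodic Jacobi field). Integrating the leafwise inequality over the transverse parameter then yields $\iint_{\Omega_{\alpha\beta}}[L_{11}+2L_{12}+L_{22}]\,d\Omega\le0$.

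\emph{Main obstacle.} The delicate point is regularity: the invariant curves are only assumed continuous (a priori merely Lipschitz), so $\psi$ need not be $C^1$ and the Jacobi field $\psi'$ is not classically defined. The rigorous treatment, as in \cite[Section 3]{bi} and the sketch in \cite[Section 3]{BaBe}, replaces the naive construction by the fact that orbits lying on invariant curves of a twist map are minimal, and minimal orbits carry a positive Jacobi field in the appropriate weak sense; the inequality $\psi'+1/\psi'\ge2$ and the measure-invariance reindexing persist in this formulation. Handling this low regularity — rather than the algebra, which is routine — is where care is required, and it is precisely there that the twist condition $L_{12}>0$ and the $T$-invariance of $\Omega$ enter decisively.
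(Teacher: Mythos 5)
Your core ingredients are the right ones, and they are exactly those of the proof the paper points to: the paper does not reprove Proposition \ref{necessaria} but cites \cite[Section 3]{bi} (sketched in \cite[Section 3]{BaBe}), whose Hopf-type argument rests on a positive discrete Jacobi field (no conjugate points), the twist condition $L_{12}>0$, the convexity inequality $a+1/a\ge 2$, and -- crucially -- the $T$-invariance of $\Omega$. Where you depart from that argument is the integration scheme: you derive a leafwise inequality against a leafwise invariant probability measure $\mu$ and then reassemble \eqref{dis} by disintegrating $\Omega$ over the foliation.

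That scheme, as written, has a gap sharper than the regularity caveat you flag at the end. The invariant curves are only Lipschitz graphs, so $\psi'=w'$ exists only Lebesgue-almost everywhere on each leaf; but the measures you integrate against are precisely the ones that need not charge the differentiability points. On a leaf of irrational rotation number the unique invariant measure can be singular with respect to arc length, and on a leaf of rational rotation number your chosen $\mu$ sits on finitely many periodic points, where nothing forces $w$ to be differentiable. Hence the identity $\int A(\psi'+1/\psi')\,d\mu+\int(P+Q)\,d\mu=0$ cannot even be written: its integrand may be undefined on a set of full $\mu$-measure. The same mismatch (a Lebesgue-null set can carry full mass of singular conditional measures; this is the classical pathology of merely continuous foliations) undercuts the claim that the disintegration conditionals ``coincide with the measures $\mu$ above.'' The repair you gesture at in your last paragraph -- orbits on invariant curves are minimizing, hence disconjugate, hence carry a positive Jacobi field $J_n$ with ratios $a_n=J_{n+1}/J_n>0$ defined along \emph{every} orbit, with no differentiation of the leaves -- is indeed the correct one, and it is Bialy's. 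But once you have it, the leafwise measures and the disintegration are an unnecessary detour: the pointwise identity $[L_{12}/a+L_{22}](z)+[L_{11}+L_{12}a](Tz)=0$ holds on the invariant region $\Omega_{\alpha\beta}$, one integrates it \emph{once} against the invariant measure $\Omega$, uses $T_*\Omega=\Omega$ to reindex the second bracket, and concludes with $a+1/a\ge 2$ and $L_{12}>0$ (note that the pointwise identity also gives the upper bounds on $L_{12}a$ and $L_{12}/a$ needed for integrability). This is exactly why the paper singles out the invariance of $\Omega$ under $T$ as a crucial input: it replaces, in one stroke, both your leafwise reindexing and your disintegration step, and it only requires data defined Lebesgue-almost everywhere.
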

\noindent The next section is devoted to study (\ref{dis}), in the special case where $D$ is a centrally symmetric billiard table and $\Omega_{\alpha\beta}$ is $\Omega_{\gamma \delta}$ or $\Omega_{\delta \gamma^*}$, that is (by (\ref{2 forma})):
\begin{equation} \label{eccola}
\int_{\Omega_{\gamma \delta}} [L_{11}(t_1,t_2) + 2L_{12}(t_1,t_2) + L_{22}(t_1,t_2)] L_{12}(t_1,t_2) \, dt_1 dt_2 \le 0
\end{equation}
and 
\begin{equation} \label{eccola1}
\int_{\Omega_{\delta \gamma^*}} [L_{11}(t_1,t_2) + 2L_{12}(t_1,t_2) + L_{22}(t_1,t_2)] L_{12}(t_1,t_2) \, dt_1 dt_2 \le 0.
\end{equation}

\section{Three technical lemmas} \label{TEC}
In this section, we prove three properties of integrals (\ref{eccola}) and (\ref{eccola1}), which will be useful in the sequel.  
\begin{lemma} \label{primo lemma}
For $L(t_1,t_2)=\omega(\gamma(t_1),\gamma(t_2))$ the next equalities hold:
    \begin{equation*}
        \begin{split}
            &\iint_{\Omega_{\gamma \delta}} [L_{11}(t_1,t_2)+2L_{12}(t_1,t_2)+L_{22}(t_1,t_2)]L_{12}(t_1,t_2) \, dt_1dt_2 \\
            &= \iint_{\Omega_{\delta\gamma^*}} [L_{11}(t_1,t_2)+2L_{12}(t_1,t_2)+L_{22}(t_1,t_2)]L_{12}(t_1,t_2) \, dt_1dt_2 \\
            &= \iint_{[0,\pi]^2}[L_{11}(t_1,t_2)+2L_{12}(t_1,t_2)+L_{22}(t_1,t_2)]L_{12}(t_1,t_2) \, dt_1dt_2.
        \end{split}
    \end{equation*}
\end{lemma}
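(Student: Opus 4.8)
The plan is to prove all three integrals equal by exploiting two symmetries of the integrand $F:=(L_{11}+2L_{12}+L_{22})\,L_{12}$ and then folding the regions of integration. First I would record the symmetries. Differentiating the two relations of property $5.$ (namely $L(t_1+\pi,t_2)=-L(t_1,t_2)$ and $L(t_1,t_2+\pi)=-L(t_1,t_2)$) gives $L_{ij}(t_1+\pi,t_2)=-L_{ij}(t_1,t_2)$ and $L_{ij}(t_1,t_2+\pi)=-L_{ij}(t_1,t_2)$ for all $i,j$; since $F$ is a product of two such sign-changing factors, it is $\pi$-periodic in each variable, $F(t_1+\pi,t_2)=F(t_1,t_2+\pi)=F(t_1,t_2)$. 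Moreover, the antisymmetry of the determinant $L(t_1,t_2)=\det(\gamma(t_1),\gamma(t_2))$ gives $L_{11}(t_2,t_1)=-L_{22}(t_1,t_2)$ and $L_{12}(t_2,t_1)=-L_{12}(t_1,t_2)$, so both $L_{11}+2L_{12}+L_{22}$ and $L_{12}$ change sign under the swap $(t_1,t_2)\mapsto(t_2,t_1)$, whence $F(t_2,t_1)=F(t_1,t_2)$.

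Next I would use the $\pi$-periodicity to fold the $t$-integration from $[0,2\pi]$ to $[0,\pi]$. Splitting $\int_0^{2\pi}=\int_0^{\pi}+\int_{\pi}^{2\pi}$ in $\iint_{\Omega_{\gamma\delta}}F$, then substituting $t\mapsto t+\pi$ and $s\mapsto s+\pi$ in the second piece and using $\Phi(t+\pi)=\Phi(t)+\pi$ from \eqref{iter} together with the periodicity of $F$, the second piece reproduces the first; hence $\iint_{\Omega_{\gamma\delta}}F=2A$ with $A:=\int_0^{\pi}\!\int_t^{\Phi(t)}F\,ds\,dt$, and likewise $\iint_{\Omega_{\delta\gamma^*}}F=2B$ with $B:=\int_0^{\pi}\!\int_{\Phi(t)}^{t+\pi}F\,ds\,dt$. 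Since for fixed $t$ the intervals $[t,\Phi(t)]$ and $[\Phi(t),t+\pi]$ join to a full $s$-period $[t,t+\pi]$, periodicity in $s$ gives $A+B=\int_0^{\pi}\!\int_0^{\pi}F\,ds\,dt=\iint_{[0,\pi]^2}F$. Thus the whole lemma reduces to the single identity $A=B$.

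Finally, to prove $A=B$ I would apply the measure-preserving change of variables $(t,s)\mapsto(s-\pi,t)$, under which $F$ is invariant by the swap symmetry and $\pi$-periodicity. Using the relation $\Phi^{-1}(s)=\Phi(s)-\pi$ (a consequence of $\Phi^2=\mathrm{id}+\pi$) to rewrite the transformed boundary curves, this map carries the region of $B$ onto $R:=\{0\le s\le\pi,\ t\le s\le\Phi(t)\}$, which is the same diagonal strip $\{t\le s\le\Phi(t)\}$ as the region $A=\{0\le t\le\pi,\ t\le s\le\Phi(t)\}$ but cut off by $s\in[0,\pi]$ instead of $t\in[0,\pi]$. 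Hence $A$ and $R$ differ only by two overflow regions beyond the respective cutoffs, and I would verify that the shift $(t,s)\mapsto(t+\pi,s+\pi)$ maps $R\setminus A$ onto $A\setminus R$ while preserving $F$ (again via $\Phi(t+\pi)=\Phi(t)+\pi$), so $\iint_R F=\iint_A F$ and therefore $A=B$. Conceptually, on the torus $(\mathbb{R}/\pi\mathbb{Z})^2$ the curves $\{s=t\}$ and $\{s=\Phi(t)\}$ are disjoint and both invariant under the reflection $(t,s)\mapsto(s,t)$, which interchanges the two annuli they bound; the identity $A=B$ says precisely that $F$ integrates equally over these two annuli. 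I expect this last step to be the main obstacle: the symmetries and the folding are essentially bookkeeping, whereas matching the overflow regions genuinely requires the dynamical identity $\Phi^2=\mathrm{id}+\pi$, which is what makes $\delta$ symmetric about the diagonal and is available here only because of Proposition \ref{parallelogramma}.
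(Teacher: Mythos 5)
Your proof is correct and follows essentially the same route as the paper's: both arguments rest on the swap symmetry $F(t_2,t_1)=F(t_1,t_2)$ coming from the antisymmetry of $\omega$, the $\pi$-periodicity of $F$ in each variable coming from Property $5.$, and the relations \eqref{iter} for $\Phi$, assembled into a cut-and-paste of the integration regions. Your folding to $[0,\pi]$ and matching of the overflow regions is simply a more explicit write-up of the region-matching that the paper delegates to Figure \ref{Areas}.
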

\noindent \begin{proof}
\begin{figure}
\centering
\includegraphics[scale=0.65]{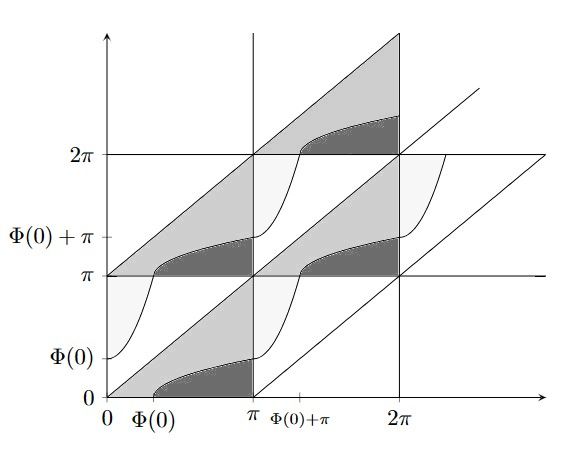}
\caption{The same colored regions have equal area.}
\label{Areas}
\end{figure}
From \eqref{iter}, we conclude that $(i)$ the graph of $\Phi$ in $[\Phi(0),\pi]$ is precisely the inverse of the graph of $\Phi$ in $[0,\Phi(0)]$, composed with the $\pi$-translation in the (positive) vertical direction; $(ii)$ the graph of $\Phi$ in $[\pi,2\pi]$ is the $\pi$-translation in the (positive) vertical direction of the graph of $\Phi$ in $[0,\pi]$. Now, as in the proof of \cite[Lemma 4.1]{BaBe}, let
$$\overline{\Omega}_{\gamma\delta}=\lbrace (t_2,t_1)\in[0,2\pi]^2:(t_1,t_2)\in\Omega_{\gamma\delta}\rbrace$$
so that --referring to Figure \ref{Areas}-- the same colored regions have equal area. Let
$$F(t_1,t_2) := [L_{11}(t_1,t_2)+2L_{12}(t_1,t_2)+L_{22}(t_1,t_2)]L_{12}(t_1,t_2).$$
Since $L_{11}(t_1,t_2)=-L_{22}(t_2,t_1)$ and $L_{12}(t_1,t_2)=-L_{12}(t_2,t_1)$, we have that 
$$\iint_{\Omega_{\gamma\delta}}  F(t_1,t_2) \, dt_1dt_2 = \iint_{\Omega_{\gamma\delta}}  F(t_2,t_1) \,dt_1dt_2= \iint_{\overline{\Omega}_{\gamma\delta}}  F(t_1,t_2) \, dt_1dt_2.$$
As a consequence, and by Property $5.$ recalled in Section \ref{radon}, we get
$$2\iint_{\Omega_{\gamma\delta}} F(t_1,t_2) \, dt_1dt_2 = \iint_{\Omega_{\gamma\delta}} F(t_1,t_2) \, dt_1dt_2 + \iint_{\overline{\Omega}_{\gamma\delta}} F(t_1,t_2) \, dt_1dt_2 = %$$
%$$\iint_{[0,\pi]^2} F(t,t_2) \, dtdt_2 + \iint_{[\pi,2\pi]^2} F(t,t_2) \, dtdt_2 = 
2 \iint_{[0,\pi]^2} F(t_1,t_2) \, dt_1dt_2$$
and the statement immediately follows.
\end{proof}
\begin{lemma} \label{secondo lemma}
    For $L(t_1,t_2)=\omega(\gamma(t_1),\gamma(t_2))$ the next equality holds:
    \begin{equation*}
        \iint_{[0,\pi]^2}[L_{11}(t_1,t_2)+L_{22}(t_1,t_2)]L_{12}(t_1,t_2)dt_1dt_2=-A(D)\int_0^\pi \omega(\gamma''(t),\gamma'(t))dt,
    \end{equation*}
where $A(D)$ denotes the area of the billiard table $D$. 
\end{lemma}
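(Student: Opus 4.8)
The plan is to split the integrand by linearity of $\omega$, writing $L_{11}+L_{22}=\omega(\gamma''(t_1),\gamma(t_2))+\omega(\gamma(t_1),\gamma''(t_2))$, so that the left-hand side becomes $I_1+I_2$ with
$$I_1=\iint_{[0,\pi]^2}\omega(\gamma''(t_1),\gamma(t_2))\,\omega(\gamma'(t_1),\gamma'(t_2))\,dt_1dt_2,\qquad I_2=\iint_{[0,\pi]^2}\omega(\gamma(t_1),\gamma''(t_2))\,\omega(\gamma'(t_1),\gamma'(t_2))\,dt_1dt_2.$$
Exchanging the names $t_1\leftrightarrow t_2$ on the symmetric square $[0,\pi]^2$ and using $\omega(u,v)=-\omega(v,u)$ shows at once that $I_2=I_1$, so it suffices to evaluate $I_1$ and double it.

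For $I_1$ I would produce two relations. First, an integration by parts in $t_1$: since $\omega(\gamma''(t_1),\gamma(t_2))=\partial_{t_1}\omega(\gamma'(t_1),\gamma(t_2))$ and $\partial_{t_1}\omega(\gamma'(t_1),\gamma'(t_2))=\omega(\gamma''(t_1),\gamma'(t_2))$, integrating by parts transfers the $t_1$-derivative onto the second factor. The boundary contribution $\bigl[\omega(\gamma'(t_1),\gamma(t_2))\,\omega(\gamma'(t_1),\gamma'(t_2))\bigr]_{t_1=0}^{t_1=\pi}$ vanishes, because central symmetry gives $\gamma'(t+\pi)=-\gamma'(t)$, whence both factors change sign from $t_1=0$ to $t_1=\pi$ and their product is unchanged. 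This yields $I_1=-J$, where $J:=\iint_{[0,\pi]^2}\omega(\gamma'(t_1),\gamma(t_2))\,\omega(\gamma''(t_1),\gamma'(t_2))\,dt_1dt_2$. Second, the elementary $2\times2$ determinant (Cramer) identity $\omega(a,b)\,\omega(c,d)=\omega(c,b)\,\omega(a,d)+\omega(a,c)\,\omega(b,d)$, valid for all planar vectors because any three of them are linearly dependent, applied with $a=\gamma''(t_1)$, $b=\gamma(t_2)$, $c=\gamma'(t_1)$, $d=\gamma'(t_2)$, rewrites the integrand of $I_1$ as the integrand of $J$ plus the \emph{separated} term $\omega(\gamma''(t_1),\gamma'(t_1))\,\omega(\gamma(t_2),\gamma'(t_2))$. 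Hence
$$I_1=J+\Bigl(\int_0^\pi\omega(\gamma''(t),\gamma'(t))\,dt\Bigr)\Bigl(\int_0^\pi\omega(\gamma(t),\gamma'(t))\,dt\Bigr).$$

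Combining the two relations eliminates $J$: from $J=-I_1$ I obtain $2I_1=\bigl(\int_0^\pi\omega(\gamma'',\gamma')\,dt\bigr)\bigl(\int_0^\pi\omega(\gamma,\gamma')\,dt\bigr)$. It then remains to recognize the second factor as an area: by central symmetry the integrand $\omega(\gamma(t),\gamma'(t))=\det(\gamma,\gamma')$ is invariant under $t\mapsto t+\pi$, so $\int_0^\pi\omega(\gamma,\gamma')\,dt=\tfrac12\int_0^{2\pi}\det(\gamma,\gamma')\,dt=A(D)$. Since $I_2=I_1$, doubling gives $I_1+I_2=A(D)\int_0^\pi\omega(\gamma''(t),\gamma'(t))\,dt$, which is the asserted identity once the orientation convention fixing the sign of the area factor is taken into account.

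The conceptual crux is the determinant identity: it is exactly what decouples the genuinely two-dimensional integral into a product of one-dimensional integrals and thereby manufactures the area factor $A(D)$; without it one cannot reduce $L_{11}+L_{22}$ paired against $L_{12}$ to a boundary quantity. The delicate points I would watch are the vanishing of the boundary terms in the integration by parts — where the centrally symmetric relation $\gamma'(t+\pi)=-\gamma'(t)$ is indispensable — and the careful sign and orientation bookkeeping when identifying $\int_0^\pi\omega(\gamma,\gamma')\,dt$ with $A(D)$; everything else is routine calculus.
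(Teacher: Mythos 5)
Your argument is correct, and it is organized genuinely differently from the paper's. The paper proceeds by brute force: it expands both determinants in coordinates, claims that after separating variables two of the four resulting products of one\-/dimensional integrals vanish by central symmetry, and rewrites the surviving pair (via one-dimensional integration by parts, again using $\gamma'(\pi)=-\gamma'(0)$, $\gamma(\pi)=-\gamma(0)$ to kill boundary terms) as $\bigl(\int_0^\pi\omega(\gamma'',\gamma')\,dt\bigr)\bigl(\int_0^\pi\gamma_1\gamma_2'\,dt\bigr)$. You stay coordinate-free: the two-dimensional integration by parts gives $I_1=-J$, and the Pl\"ucker relation $\omega(a,b)\omega(c,d)=\omega(c,b)\omega(a,d)+\omega(a,c)\omega(b,d)$ gives $I_1=J+\bigl(\int_0^\pi\omega(\gamma'',\gamma')\,dt\bigr)\bigl(\int_0^\pi\omega(\gamma,\gamma')\,dt\bigr)$, so eliminating $J$ decouples the double integral in one stroke. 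The ingredients are the same (separation of variables, plus central symmetry to kill boundary terms), but your packaging is cleaner and, as it happens, less error-prone: in the paper's expansion the two monomials displayed as ``vanishing'' are in fact exactly the two that survive (it is the other two, of the form $\gamma_i''\gamma_i'$ and $\gamma_j\gamma_j'$, whose integrals vanish), although the paper recovers the correct separated product in the next line.

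The one point you must not gloss over is the sign, and it is not a matter of ``orientation convention'': the paper fixes the counterclockwise orientation and $\omega=\det$, so there is no residual freedom, and your conclusion $\iint_{[0,\pi]^2}[L_{11}+L_{22}]L_{12}\,dt_1dt_2=+A(D)\int_0^\pi\omega(\gamma'',\gamma')\,dt$ genuinely differs from the statement of Lemma \ref{secondo lemma}. Yours is the correct one. Check on the unit circle: there $L_{11}=L_{22}=-\sin(t_2-t_1)$, $L_{12}=\sin(t_2-t_1)$, so the left-hand side equals $-\pi^2$, and $A(D)\int_0^\pi\omega(\gamma'',\gamma')\,dt=\pi\cdot(-\pi)=-\pi^2$, whereas the stated right-hand side would give $+\pi^2$. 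The misprint in the paper originates in the step $\int_0^\pi\gamma_1\gamma_2'\,dt=-A(D)/2$ of its proof; with the counterclockwise orientation this integral is $+A(D)/2$. Note also that the paper's subsequent equation \eqref{intA} carries \emph{your} sign, not the lemma's, since $\omega(\gamma'',\gamma')=-(p''+p)^2$ in the support-function parametrization, so nothing downstream is affected. In short: your proof is right, and instead of hedging at the end you should state plainly that the right-hand side of the lemma should read $A(D)\int_0^\pi\omega(\gamma''(t),\gamma'(t))\,dt$, equivalently $-A(D)\int_0^\pi\omega(\gamma'(t),\gamma''(t))\,dt$.
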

\begin{proof} The proof is similar to the one of \cite[Lemma 4.2]{BaBe}; however, in order to integrate in $[0,\pi]^2$ instead of in $[0,2\pi]^2$, the centrally symmetric hypothesis plays a crucial role. \\
Integrating by parts, we have
\[
\begin{split}
&\iint_{[0,\pi]^2}L_{11}(t_1,t_2)L_{12}(t_1,t_2) \, dt_1dt_2 =\iint_{[0,\pi]^2} \omega(\gamma''(t_1),\gamma(t_2)) \omega(\gamma'(t_1), \gamma'(t_2)) \, dt_1 dt_2 \\
&=\iint_{[0,\pi]^2} \left( \gamma_1''(t_1) \gamma_2(t_2) - \gamma_2''(t_1) \gamma_1(t_2) \right) \left( \gamma'_1(t_1) \gamma'_2(t_2)-\gamma'_2(t_1) \gamma'_1(t_2) \right) \, dt_1 dt_2
\end{split}
\]
The terms 
$$\iint_{[0,\pi]^2}\gamma_1''(t_1)\gamma_2(t_2)\gamma_2'(t_1)\gamma_1'(t_2) \, dt_1dt_2 = \iint_{[0,\pi]^2}\gamma_2''(t_1)\gamma_1(t_2)\gamma_1'(t_1)\gamma_2'(t_2) \, dt_1dt_2 = 0$$
since, by the centrally symmetric hypothesis, $\gamma'(0)=-\gamma'(\pi)$. As a consequence, we get
%$$\iint_{[0,\pi]^2}\left[\gamma_1''(t)\gamma'_1(t)\gamma_2'(t_2)\gamma_2(t_2)+\gamma_2''(t)\gamma_2'(t)\gamma_1'(t_2)\gamma_1(t_2)\right]dtdt_2 -\iint_{[0,\pi]^2}\left(\gamma_2''(t)\gamma_1(t_2)\gamma_1'(t)\gamma_2'(t_2)+\gamma_1''(t)\gamma_2(t_2)\gamma_2'(t)\gamma_1'(t_2)\right)dtdt_2.$$       
    %The first summand gives no contribution, in fact 
$$\iint_{[0,\pi]^2}L_{11}(t_1,t_2)L_{12}(t_1,t_2) \, dt_1dt_2=\iint_{[0,\pi]^2}\gamma_1''(t_1)\gamma_2'(t_1)\gamma_2'(t_2)\gamma_1(t_2)-\gamma_2''(t_1)\gamma_1'(t_1)\gamma_2'(t_2)\gamma_1(t_2) \, dt_1dt_2$$    
$$=\int_0^\pi\omega(\gamma''(t),\gamma'(t)) \, dt \int_0^\pi\gamma_1(t)\gamma_2'(t) \, dt=-\frac{A(D)}{2}\int_0^\pi\omega(\gamma''(t),\gamma'(t)) \,dt.$$
Since the same argument holds for $\iint_{[0,\pi]^2}L_{22}(t_1,t_2)L_{12}(t_1,t_2) \, dt_1dt_2$, we conclude the proof. 
\end{proof}
We now parametrize the boundary by the direction of its tangent line and denote by $e_\alpha=(-\sin{\alpha}, \cos{\alpha})$ the unitary vector forming an angle $\alpha\in[0,2\pi]$ with respect to the vertical direction $(0,1)$. Notice that for every $\alpha$ there exists a unique point $\gamma(\alpha)$ such that $\gamma'(\alpha)=\|\gamma'(\alpha)\|e_\alpha$. Let $p:[0,2\pi]\to\mathbb{R^+}$ be the corresponding support function, that is the distance from the origin of the tangent line to $\gamma$ at $\gamma(\alpha)$. We refer to Figure \ref{palpha}. 
\begin{figure}
\centering
\includegraphics[scale=0.7]{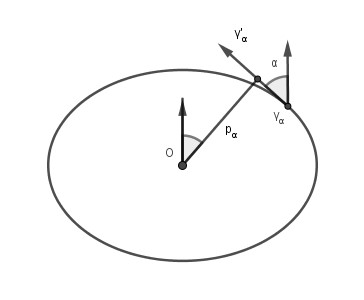}
\caption{The angle $\alpha$ and the support function $p(\alpha)$.}
\label{palpha}
\end{figure}
It is easy to see that $$\gamma(\alpha)=p'(\alpha)e_\alpha-p(\alpha)Je_\alpha$$ where $J$ is the rotation of $\frac{\pi}{2}$ in the positive verse. Consequently,
$$\gamma'(\alpha)=(p''(\alpha)+p(\alpha))e_\alpha \qquad \text{and}\qquad \gamma''(\alpha)=(p'''(\alpha)+p'(\alpha))e_\alpha+(p''(\alpha)+p(\alpha))Je_\alpha.$$ 
As already stressed in \cite[Section 4]{BaBe}, as a consequence of the strongly-convexity assumption of $\partial D$, it holds that $$(p''(\alpha) + p(\alpha))>0.$$
Therefore, the twist condition is also satisfied for $L(\alpha_1,\alpha_2) = \omega (\gamma(\alpha_1),\gamma(\alpha_2))$:
$$L_{12}(\alpha_1,\alpha_2) = (p''(\alpha_1) + p(\alpha_1))(p''(\alpha_2) + p(\alpha_2)) \sin(\alpha_2-\alpha_1) > 0$$
on $\mathcal{P} = \{(\alpha_1,\alpha_2): \ 0 < \alpha_2 - \alpha_1 < \pi \}$. \\
From the centrally symmetric assumption, it immediately follows that the functions $p$, $p'$ and $p''$ are $\pi$-periodic. Moreover, with such a parametrization, the result of Lemma \ref{secondo lemma} reads:
\begin{equation} \label{intA}
    \iint_{[0,\pi]^2}  [L_{11}(\alpha_1,\alpha_2)+L_{22}(\alpha_1,\alpha_2)]L_{12}(\alpha_1,\alpha_2) \, d\alpha_1d\alpha_2 = - A(D)\int_0^\pi (p''(\alpha)+p(\alpha))^2 \,d\alpha.
    \end{equation}
\begin{lemma}
    For $L(\alpha_1,\alpha_2)=\omega(\gamma(\alpha_1),\gamma(\alpha_2))$ the next equality holds: 
\begin{multline}     \label{intB} 
2\iint_{[0,\pi]^2}L_{12}^2(\alpha_1,\alpha_2) \, d\alpha_1d\alpha_2  \\
        =\left(\int_0^\pi (p''(\alpha)+p(\alpha))^2 \, d\alpha\right)^2-\left(\int_0^\pi (p''(\alpha)+p(\alpha))^2\cos(2\alpha) \, d\alpha\right)^2-\left(\int_0^\pi(p''(\alpha)+p(\alpha))^2\sin(2\alpha) \, d\alpha\right)^2.
\end{multline}
\end{lemma}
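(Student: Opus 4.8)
The plan is to compute the left-hand side $2\iint_{[0,\pi]^2} L_{12}^2 \, d\alpha_1 d\alpha_2$ directly using the explicit twist expression $L_{12}(\alpha_1,\alpha_2) = (p''(\alpha_1)+p(\alpha_1))(p''(\alpha_2)+p(\alpha_2))\sin(\alpha_2-\alpha_1)$ derived just above the statement, and to show that it equals the claimed combination of squared integrals. To streamline notation I would set $\rho(\alpha) := (p''(\alpha)+p(\alpha))^2 \ge 0$, which is $\pi$-periodic since $p$, $p'$, $p''$ all are. Then
\begin{equation*}
L_{12}^2(\alpha_1,\alpha_2) = \rho(\alpha_1)\,\rho(\alpha_2)\,\sin^2(\alpha_2-\alpha_1),
\end{equation*}
so the integral factors into a bilinear expression in $\rho$ once the $\sin^2$ is expanded.

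The key step is the trigonometric identity $\sin^2(\alpha_2-\alpha_1) = \tfrac12\bigl(1 - \cos(2\alpha_2-2\alpha_1)\bigr)$, followed by the angle-subtraction formula $\cos(2\alpha_2-2\alpha_1) = \cos 2\alpha_1\cos 2\alpha_2 + \sin 2\alpha_1\sin 2\alpha_2$. Substituting and integrating, the product structure separates the double integral into products of single integrals over $[0,\pi]$. Writing
\begin{equation*}
I_0 := \int_0^\pi \rho(\alpha)\,d\alpha, \qquad I_c := \int_0^\pi \rho(\alpha)\cos(2\alpha)\,d\alpha, \qquad I_s := \int_0^\pi \rho(\alpha)\sin(2\alpha)\,d\alpha,
\end{equation*}
I would obtain
\begin{equation*}
\iint_{[0,\pi]^2} L_{12}^2 \, d\alpha_1 d\alpha_2 = \tfrac12 I_0^2 - \tfrac12\bigl(I_c^2 + I_s^2\bigr),
\end{equation*}
because the cross term integrates the product $\rho(\alpha_1)\cos 2\alpha_1$ against $\rho(\alpha_2)\cos 2\alpha_2$ (giving $I_c^2$) and likewise for the sine pieces. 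Multiplying by $2$ yields exactly the right-hand side of \eqref{intB}, with $I_0^2$, $I_c^2$, $I_s^2$ matching the three displayed squared integrals.

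I do not expect a genuine obstacle here, since the computation is a direct factorization — the only care needed is bookkeeping of the constant $\tfrac12$ from the $\sin^2$ identity and verifying that the mixed term $\int\!\!\int \rho(\alpha_1)\rho(\alpha_2)\cos 2\alpha_1\sin 2\alpha_2$ does \emph{not} contribute a cross term $I_c I_s$ but rather that the separation produces precisely $I_c^2 + I_s^2$. Concretely, after the angle-subtraction expansion the double integral of the cosine part is $\iint \rho(\alpha_1)\rho(\alpha_2)(\cos 2\alpha_1\cos 2\alpha_2 + \sin 2\alpha_1 \sin 2\alpha_2)\,d\alpha_1 d\alpha_2 = I_c^2 + I_s^2$, the two terms factoring cleanly with no surviving $I_c I_s$ cross term. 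This confirms the claimed identity and completes the proof.
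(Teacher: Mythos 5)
Your proposal is correct and follows essentially the same route as the paper: the paper likewise squares the explicit twist expression for $L_{12}$, applies the identity $\sin^2(\alpha_1-\alpha_2)=\tfrac12\left(1-\cos(2\alpha_1)\cos(2\alpha_2)-\sin(2\alpha_1)\sin(2\alpha_2)\right)$, and separates the double integral into products of single integrals. Your write-up merely makes explicit the bookkeeping that the paper leaves to the reader.
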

\begin{proof} Same proof of \cite[Lemma 4.3]{BaBe}. We first observe that 
$$L_{12}^2(\alpha_1,\alpha_2)=(p''(\alpha_1)+p(\alpha_1))^2(p''(\alpha_2)+p(\alpha_2))^2\sin^2(\alpha_1-\alpha_2).$$
Since 
$$\sin^2(\alpha_1-\alpha_2)=\frac{1}{2}\left(1-\cos(2\alpha_1)\cos(2\alpha_2)-\sin(2\alpha_1)\sin(2\alpha_2)\right),$$ 
we take the double integral and conclude.
\end{proof}
\section{Proof of Theorem \ref{4-per}} \label{fine}
Under the hypotheses of Theorem \ref{4-per}, and by Proposition \ref{necessaria}, one of the next integral inequalities necessarily holds:
$$\iint_{\Omega_{\gamma\delta}}  [L_{11}(\alpha_1,\alpha_2)+2L_{12}(\alpha_1,\alpha_2)+L_{22}(\alpha_1,\alpha_2)]L_{12}(\alpha_1,\alpha_2) \, d\alpha_1d\alpha_2 \le 0$$
or 
$$\iint_{\Omega_{\delta \gamma^*}}  [L_{11}(\alpha_1,\alpha_2)+2L_{12}(\alpha_1,\alpha_2)+L_{22}(\alpha_1,\alpha_2)]L_{12}(\alpha_1,\alpha_2) \, d\alpha_1d\alpha_2 \le 0.$$
From Lemma \ref{primo lemma}, equalities (\ref{intA}) and (\ref{intB}) and the $\pi$-periodicity of $p$ and $p''$, the above inequalities read
\[
\begin{split}
&4\iint_{\Omega_{\gamma\delta}}  [L_{11}(\alpha_1,\alpha_2)+2L_{12}(\alpha_1,\alpha_2)+L_{22}(\alpha_1,\alpha_2)]L_{12}(\alpha_1,\alpha_2) \, d\alpha_1d\alpha_2 \\
&= 4\iint_{\Omega_{\delta \gamma^*}}  [L_{11}(\alpha_1,\alpha_2)+2L_{12}(\alpha_1,\alpha_2)+L_{22}(\alpha_1,\alpha_2)]L_{12}(\alpha_1,\alpha_2) \, d\alpha_1d\alpha_2 \\
%&=- 4A(D)\int_0^{\pi} (p''(\alpha)+p(\alpha))^2 \,d\alpha 
    %+ 4\left(\int_0^{\pi} (p''(\alpha)+p(\alpha))^2 \, d\alpha\right)^2 \\
%&-4\left(\int_0^{\pi} (p''(\alpha)+p(\alpha))^2\cos(2\alpha) \, d\alpha\right)^2
%-4\left(\int_0^{\pi}(p''(\alpha)+p(\alpha))^2\sin(2\alpha) \, d\alpha\right)^2\\
&=-2A(D)\int_0^{2\pi} (p''(\alpha)+p(\alpha))^2 \,d\alpha 
    +\left(\int_0^{2\pi} (p''(\alpha)+p(\alpha))^2 \, d\alpha\right)^2 \\
&-\left(\int_0^{2\pi} (p''(\alpha)+p(\alpha))^2\cos(2\alpha) \, d\alpha\right)^2
-\left(\int_0^{2\pi}(p''(\alpha)+p(\alpha))^2\sin(2\alpha) \, d\alpha\right)^2 \le 0.
\end{split} 
\]
This is the same inequality of \cite[Beginning of Section 5]{BaBe}. We sketch the final argument for reader's convenience, see \cite[Section 5]{BaBe} for all details. \\
%This implies that 
%\begin{equation}
    %\int_0^\pi(p(\alpha)+p''(\alpha))^2d\alpha=\frac{1}{2}\int_0^{2\pi}(p(\alpha)+p''(\alpha))^2d\alpha 
    %\end{equation}
    %\begin{equation}
    %\begin{split}
    %\int_0^\pi(p(\alpha)+p''(\alpha))^2\cos(2\alpha)d\alpha&=\frac{1}{2}\int_0^{2\pi}%(p(\alpha)+p''(\alpha))^2\cos(2\alpha)d\alpha, \\
    %\int_0^\pi(p(\alpha)+p''(\alpha))^2\sin(2\alpha)d\alpha&=\frac{1}{2}\int_0^{2\pi}%(p(\alpha)+p''(\alpha))^2\sin(2\alpha)d\alpha.   
%\end{split}
%\end{equation}
First, by the affine equivariance of the symplectic billiard map, we can apply an affine transformation $\varphi_{a,\sigma}$ defined as the composition of the unitary, diagonal, linear transformation $$\varphi_a(x,y)=\left( ax,\frac{y}{a}\right)$$ with $a>0$, after the rotation of angle $\sigma\in [0,2\pi]$. Let $D_{a,\sigma}:=\varphi_{a,\sigma}(D)$ and denote by $p_{a,\sigma}(\psi)$ its corresponding support function. By \cite[Proposition 5.1]{BaBe}, there exists a couple $(a,\sigma)\in \mathbb{R}_+\times[0,\frac{\pi}{2}]$ such that
\begin{equation*}
    \int^{2\pi}_0 p_{a,\sigma}(\psi)\cos(2\psi)\,d\psi=\int^{2\pi}_0 p_{a,\sigma}(\psi)\sin(2\psi)\,d\psi=0.
\end{equation*}
Secondly, we use the usual arc length as parameter to describe $D_{a,\sigma}$, see \cite[Remark 5.3]{BaBe}. \\
With these specific choices of $(i)$ affine transformation and $(ii)$ parametrization, the above necessary integral inequality simply reads:
$$\left(\int_0^{2\pi}(p''_{a,\sigma}+p_{a,\sigma})d\alpha\right)^2\leq 4\pi A(D_{a,\sigma}) \Longleftrightarrow (l(\partial D_{a,\sigma}))^2\leq 4\pi A(D_{a,\sigma}),$$ where $l(\partial D_{a,\sigma})$ is the perimeter length of $\partial D_{a,\sigma}$. By the planar isoperimetric inequality, this means that the domain $D_{a,\sigma}$ is necessarily a circle  and --from the very construction of $\varphi_{a,\sigma}$-- that the initial domain $D$ is an ellipse. \hfill $\Box$


\begin{thebibliography}{}
\bibitem{ALB} Albers P.; Tabachnikov S. Introducing symplectic billiards. Adv. Math. 333 (2018): 822–67.
\bibitem{ALB1} Albers P.; Tabachnikov S. Monotone twist maps and Dowker-type theorems, (2023), preprint: arXiv:2307.01485.
\bibitem{BMT} Balestro, V.; Martini, H.; Teixeira, R. A new construction of Radon curves and related topics. Aequat. Math. 90, 1013–1024 (2016).
\bibitem{BaBe} Baracco L.; Bernardi O. Totally integrable symplectic billiards are ellipses. Preprint: https://doi.org/10.48550/arXiv.2305.19701 (2023)
\bibitem{bi} Bialy, M. Convex billiards and a theorem by E. Hopf. Math. Z. 214 (1993), no. 1, 147–154.
\bibitem{BG} Bialy, M. Gutkin billiard tables in higher dimensions and rigidity, IOP Publishing Ltd \& London Mathematical Society Nonlinearity, Volume 31, Number 5, (2018).
\bibitem{BO} Bialy, M. Integrable Outer billiards and rigidity (2023). Preprint: https://arxiv.org/abs/2306.12494
\bibitem{BIAMI} Bialy M.; Mironov A.E., Angular billiard and algebraic Birkhoff conjecture, Advances in Mathematics,
Volume 313, (2017) 102--126.
\bibitem{BM} Bialy M.; Mironov A.E. The Birkhoff-Poritsky conjecture for centrally symmetric billiard tables. Ann. of Math. (2), 196(1):389–413, (2022).
\bibitem{BT} Bialy M.; Tsodikovich D. Billiard Tables with Rotational Symmetry. International Mathematics Research Notices, Volume 2023, Issue 5, (2023) 3970–-4003.
\bibitem{Alf} Kaloshin V.; Sorrentino A. On the integrability of Birkhoff billiards. Philosophical Transactions of the Royal Society A, Volume 376, Issue 2131, (2018).
\bibitem{Ka} Katok, A.; Hasselblatt, B. Introduction to the modern theory of dynamical systems.
Encyclopedia Math. Appl., 54 Cambridge University Press, Cambridge, (1995).
\bibitem{MS} Martini, H.; Swanepoel, K. Equiframed Curves – A Generalization of Radon Curves. Monatsh. Math. 141, 301–314 (2004).
\bibitem{Ra1} Martini, H.; Swanepoel, K.J.: Antinorms and Radon curves. Aequationes Math. 71, 110–138 (2006).
\bibitem{Si} Siburg K.F. The principle of least action in geometry and dynamics. Lecture Notes in Mathematics, vol. 1844, xiii+ 128 pp. Berlin, Germany: Springer, (2004).
\bibitem{Toth} Fejes Tóth, L.; Fejes Tóth, G.; Kuperberg, W.: Lagerungen - arrangements in the plane, on the sphere, and in space. Grundlehren der mathematischen Wissenschaften 360, xxiv+442 (2013). 
\bibitem{Wo} Wojtkowski M.P. Two applications of Jacobi fields to the billiard ball problem. J. Differential Geom. 40 (1994), 155–164.
\end{thebibliography}
\end{document}